\begin{document}

\title*{Einstein metrics and preserved curvature conditions for the Ricci flow}
\author{Simon Brendle}
\institute{Stanford University, 450 Serra Mall, Bldg 380, Stanford CA 94305, U.S.A. \email{brendle@math.stanford.edu}}

\maketitle

\section{Introduction}

In this note, we study Riemannian manifolds $(M,g)$ with the property that $\text{\rm Ric} = \rho \, g$ for some constant $\rho$. A Riemannian manifold with this property is called an Einstein manifold. Einstein manifolds arise naturally as critical points of the normalized Einstein-Hilbert action, and have been studied intensively (see e.g. \cite{Besse}). In particular, it is of interest to classify all Einstein manifolds satisfying a suitable curvature condition. This problem was studied by M.~Berger \cite{Berger}. In 1974, S.~Tachibana \cite{Tachibana} obtained the following important result:

\begin{theorem}[S.~Tachibana]
\label{Tachibana.theorem}
Let $(M,g)$ be a compact Einstein manifold. If $(M,g)$ has positive curvature operator, then $(M,g)$ has constant sectional curvature. Furthermore, if $(M,g)$ has nonnegative curvature operator, then $(M,g)$ is locally symmetric.
\end{theorem}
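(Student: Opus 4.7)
My approach is to derive a Bochner--Weitzenb\"ock identity for the Riemann tensor on the Einstein background, integrate it against $R$ over the compact manifold, and then exploit the curvature operator hypothesis via an algebraic inequality to force rigidity.

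First, because $(M,g)$ is Einstein, $\nabla\text{Ric}=0$, so the contracted second Bianchi identity $\nabla^i R_{ijkl}=\nabla_k R_{jl}-\nabla_l R_{jk}$ collapses to $\nabla^i R_{ijkl}=0$; the Riemann tensor is \emph{harmonic} as a section of the bundle of algebraic curvature tensors. Taking one further derivative and commuting past this divergence-free identity produces commutator terms which, after using the Einstein condition to simplify the $\text{Ric}$-contractions, assemble into a pointwise Weitzenb\"ock identity of the schematic form
\[
\Delta R_{ijkl} \;=\; 2\rho\,R_{ijkl} \;-\; 2\,Q(R)_{ijkl},
\]
where $\Delta$ is the rough (connection) Laplacian and $Q(R)=R^2+R^{\#}$ is the Hamilton reaction operator: viewing $R$ as a self-adjoint endomorphism of $\Lambda^2 T_pM$, $R^2$ is its operator square and $R^{\#}$ is built from the Lie-algebra structure on $\mathfrak{so}(n)\cong\Lambda^2 T_pM$.

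Pairing with $R$ and integrating by parts on the compact $M$ converts the rough Laplacian into $-|\nabla R|^2$, yielding
\[
\int_M |\nabla R|^2 \;+\; 2\int_M \langle Q(R),R\rangle \;=\; 2\rho\int_M |R|^2.
\]
The main obstacle --- and the heart of the argument --- is a pointwise algebraic inequality relating $\langle Q(R),R\rangle$ to $\rho\,|R|^2$. I would diagonalize $R$ on $\Lambda^2$ at each point, so that the eigenvalues $\lambda_\alpha\ge 0$ by hypothesis, and rewrite both $\langle Q(R),R\rangle=\sum_\alpha\lambda_\alpha^3+\sum c_{\alpha\beta\gamma}^{\,2}\lambda_\alpha\lambda_\beta\lambda_\gamma$ (with $c$ the structure constants of $\mathfrak{so}(n)$) and $\rho\,|R|^2$ in these eigenvalues, then combine them via the trace relation between $\operatorname{tr}_{\Lambda^2}\!R$ and $\rho$ forced by the Einstein equation to extract the needed sign.

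Once that algebraic inequality is in hand, the integral identity forces $\nabla R\equiv 0$, so $(M,g)$ is locally symmetric --- this is the second conclusion. For the first conclusion, I would inspect the equality case: when the curvature operator is \emph{strictly} positive, equality in the algebraic inequality forces all eigenvalues $\lambda_\alpha$ to coincide at each point, which is the algebraic statement that the sectional curvature is pointwise constant; Schur's lemma, applied on a connected Einstein manifold, then upgrades this to globally constant sectional curvature.
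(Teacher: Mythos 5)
Your route is the classical Bochner--Weitzenb\"ock one (essentially Tachibana's original strategy), which is genuinely different from this paper: here Theorem \ref{Tachibana.theorem} is treated as a known result whose first statement is recovered as a special case of Theorem \ref{general.principle}, proved pointwise by the maximum principle and ODE-invariance of a cone, with no integration by parts. The integral route can work, but as written your plan has a gap exactly at what you call its heart, and also a sign slip that conceals where the difficulty lies. With the paper's normalization $\text{\rm Ric}=(n-1)g$, the correct identity (equation (\ref{pde.for.R})) is $\Delta R + Q(R) = 2(n-1)R$; pairing with $R$ and integrating, and using $\int_M \langle \Delta R, R\rangle = -\int_M |\nabla R|^2$, gives $\int_M |\nabla R|^2 = \int_M \langle Q(R),R\rangle - 2(n-1)\int_M |R|^2$. (Your displayed identity has $|\nabla R|^2$ with the wrong sign relative to your own pointwise formula.) Consequently, to force $\nabla R \equiv 0$ you need an \emph{upper} bound $\langle Q(R),R\rangle \leq 2(n-1)\,|R|^2$, whereas nonnegativity of the curvature operator by itself only gives the useless lower bound $\langle Q(R),R\rangle = \sum_\alpha \lambda_\alpha^3 + \sum c_{\alpha\beta\gamma}^2 \lambda_\alpha\lambda_\beta\lambda_\gamma \geq 0$.

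The mechanism you propose for that upper bound --- diagonalize, use $\lambda_\alpha \geq 0$, and invoke ``the trace relation between $\operatorname{tr}_{\Lambda^2}R$ and $\rho$'' --- cannot suffice, because the trace relation encodes only the scalar curvature. Among algebraic curvature tensors with nonnegative curvature operator and the correct trace $\operatorname{tr}_{\Lambda^2}R = \tfrac{1}{2}n(n-1)$, the inequality fails: take $R = \omega \otimes \omega$ for a decomposable $2$-form $\omega$ (a legitimate algebraic curvature tensor), with the whole trace in one eigenvalue; then $\langle Q(R),R\rangle = \bigl(\operatorname{tr}_{\Lambda^2}R\bigr)\,|R|^2$, which exceeds $2(n-1)|R|^2$ for every $n \geq 5$. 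So the full pointwise Einstein condition (not just its trace) must enter the eigenvalue estimate, and extracting the sign from it is precisely the nontrivial algebraic lemma of Berger and Tachibana, which your proposal asserts rather than proves; the equality-case analysis needed to pass from ``locally symmetric'' to ``constant sectional curvature'' under strict positivity is likewise only asserted. It is worth noting that avoiding exactly this kind of delicate pointwise inequality is the point of the paper's argument, which substitutes for it the maximum principle applied to $S = R - \kappa I$ together with invariance of the cone under the Hamilton ODE.
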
 

In a recent paper \cite{Brendle-Duke}, we proved a substantial generalization of Tachibana's theorem. More precisely, it was shown in \cite{Brendle-Duke} that the assumption that $(M,g)$ has positive curvature operator can be replaced by the weaker condition that $(M,g)$ has positive isotropic curvature:

\begin{theorem}
\label{pic}
Let $(M,g)$ be a compact Einstein manifold of dimension $n \geq 4$. If $(M,g)$ has positive isotropic curvature, then $(M,g)$ has constant sectional curvature. Moreover, if $(M,g)$ has nonnegative isotropic curvature, then $(M,g)$ is locally symmetric.
\end{theorem}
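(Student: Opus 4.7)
The plan is to adapt Tachibana's Bochner-type proof of Theorem~\ref{Tachibana.theorem}, with the minimum isotropic curvature playing the role that the smallest eigenvalue of the curvature operator played there. Two ingredients will drive the argument: a Bochner identity for $\Delta R$ that is available because $(M,g)$ is Einstein, and the algebraic fact -- underlying the proof that positive isotropic curvature is preserved by the Ricci flow -- that Hamilton's reaction term $Q(R)=R^{2}+R^{\#}$ is inward-pointing along the boundary of the PIC cone.

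First I would derive the pointwise identity
$$\Delta R_{ijkl} \;=\; 2\rho\,R_{ijkl} \;-\; 2\,Q(R)_{ijkl}.$$
Starting from the second Bianchi identity $\nabla_{m}R_{ijkl}=\nabla_{i}R_{mjkl}+\nabla_{j}R_{imkl}$, applying $\nabla^{m}$ and commuting covariant derivatives yields the Laplacian on the left and, on the right, terms quadratic in $R$ together with terms linear in $\mathrm{Ric}$ contracted with $R$. The Einstein condition $\mathrm{Ric}=\rho g$ collapses the linear terms into $2\rho R_{ijkl}$, and the quadratic terms reassemble into Hamilton's reaction $Q(R)$.

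Next I would run a maximum-principle argument. Define $f\colon M\to\mathbb{R}$ by
$$f(p)\;=\;\inf\bigl\{R_{1313}+R_{1414}+R_{2323}+R_{2424}-2R_{1234}\bigr\},$$
the infimum over orthonormal $4$-frames at $p$; by hypothesis $f\geq 0$. Let $p_{0}\in M$ achieve the minimum of $f$, fix a minimizing frame $(e_{1},\ldots,e_{4})$ at $p_0$, and parallel-transport it along geodesics from $p_{0}$ to obtain a smooth frame field $e(\cdot)$ with $\nabla e(p_{0})=0$. Let $F(p)$ be the isotropic-curvature expression evaluated against $e(p)$; then $F\geq f\geq f(p_{0})=F(p_{0})$ on a neighborhood, so $p_{0}$ is an interior minimum of $F$ and $\Delta F(p_{0})\geq 0$. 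Since $\nabla e=0$ at $p_{0}$, $\Delta F(p_{0})$ equals the contraction of $\Delta R(p_{0})$ against the frame, and the Bochner identity gives
$$\Delta F(p_{0})\;=\;2\rho\,f(p_{0})\;-\;2\,Q(R)(e_{1},\ldots,e_{4}).$$
The PIC-invariance theorem ensures $Q(R)(e_{1},\ldots,e_{4})\geq 0$ whenever $R$ lies on the boundary of the PIC cone along $(e_{1},\ldots,e_{4})$. In the nonnegative case $f(p_{0})=0$, so the above two constraints force $\Delta F(p_{0})=0$ and $Q(R)(e_{1},\ldots,e_{4})=0$. The strong maximum principle then propagates these zeroes and, combined with the algebraic structure of the PIC boundary, yields $\nabla R\equiv 0$, so $(M,g)$ is locally symmetric. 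In the strictly positive case, applying the same identity to a well-chosen family of test frames forces the Weyl tensor to vanish; together with $\mathrm{Ric}=\rho g$ this gives constant sectional curvature.

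The main obstacle I expect is making the maximum-principle step rigorous despite the non-smoothness of $f$: since $f$ is only Lipschitz, one must phrase the argument via smooth barriers $F$ and carefully account for the second-derivative-of-frame contributions to $\Delta F$ away from $p_{0}$. Doing this requires a quantitatively sharp form of the inward-pointing property of $Q$ along the PIC boundary -- the algebraic heart of the Brendle--Schoen invariance theorem -- together with an analysis identifying which zero directions the strong maximum principle actually propagates, so as to upgrade vanishing of the offending PIC combination to full parallelism of $R$.
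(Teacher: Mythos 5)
You have correctly identified the two inputs the paper itself relies on: the elliptic identity for the curvature of an Einstein metric (equation (\ref{pde.for.R}), from Proposition 3 of \cite{Brendle-Duke}) and the Brendle--Schoen/Nguyen fact that $Q$ points into the cone of nonnegative isotropic curvature at its boundary. But there is a decisive gap exactly where the first statement of the theorem has to be proved. Your maximum-principle step only produces information when the minimizing frame at $p_0$ is a \emph{zero} of the isotropic curvature: the inward-pointing property of $Q$ is available only on the boundary of the PIC cone, so when the isotropic curvature is strictly positive you get $f(p_0)>0$, no sign on $Q$, and the inequality $0\le \Delta F(p_0)$ carries no information at all. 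The sentence ``applying the same identity to a well-chosen family of test frames forces the Weyl tensor to vanish'' is not an argument, and this is precisely the point where the paper inserts its key device: set $S=R-\kappa\,I$ with $I_{ijkl}=g_{ik}g_{jl}-g_{il}g_{jk}$ and $\kappa>0$ chosen \emph{maximal} so that $S$ lies in the cone everywhere. The Einstein condition turns (\ref{pde.for.R}) into $\Delta S+Q(S)=2(n-1)S+2(n-1)\kappa(\kappa-1)I$ (Proposition \ref{pde.for.S}), and at a point where $S$ touches the boundary, the maximum principle ($\Delta S\in T_SC$), ODE-invariance ($Q(S)\in T_SC$) and convexity force $\kappa(\kappa-1)I\in T_SC$, which is impossible for $0<\kappa<1$ because $I$ is interior; hence $\kappa=1$, $S$ is Ricci-flat, and the Micallef--Wang fact (nonzero tensors in the cone have nonzero Ricci tensor) gives $S\equiv 0$, i.e.\ constant sectional curvature. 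Without this pinching-improvement mechanism (or a substitute for it), your Bochner argument never leaves the single frame at $p_0$ and proves nothing in the strictly positive case.

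The second statement is also far beyond what your sketch delivers: knowing $\Delta F(p_0)=0$ and $Q=0$ for one frame at one point does not ``propagate to $\nabla R\equiv 0$'' by a routine strong maximum principle. The function $f$ is only Lipschitz, the vanishing concerns a single curvature combination rather than the full tensor, and in \cite{Brendle-Duke} this part requires Bony's strong maximum principle on the bundle of orthonormal four-frames, parallel invariance of the set of zero-isotropic-curvature frames, holonomy reduction, and Berger's classification; the borderline examples ($\mathbb{CP}^m$, products of symmetric spaces) show why nothing cheaper can work. Note that the present note does not reprove that statement at all---it only derives the positive-isotropic-curvature statement as a special case of Theorem \ref{general.principle}. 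Two smaller corrections: with the paper's normalization of $Q$ the identity is $\Delta R+Q(R)=2(n-1)R$, not $\Delta R=2\rho R-2Q(R)$; and the analytic obstacles you single out are in fact non-issues, since the smooth barrier $F$ handles the non-smoothness of $f$ and the symmetrized second covariant derivatives of a radially parallel frame vanish at the center point, so $\Delta F(p_0)$ really is the frame contraction of $\Delta R$. The true difficulty is the structural one described above, not the analysis at $p_0$.
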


The proof of Theorem \ref{pic} relies on the maximum principle. One of the key ingredients in the proof is the fact that nonnegative isotropic curvature is preserved by the Ricci flow (cf. \cite{Brendle-Schoen}). 

In this note, we show that the first statement in Theorem \ref{pic} can be viewed as a special case of a more general principle. To explain this, we fix an integer $n \geq 4$. We shall denote by $\mathscr{C}_B(\mathbb{R}^n)$ the space of algebraic curvature tensors on $\mathbb{R}^n$. Furthermore, for each $R \in \mathscr{C}_B(\mathbb{R}^n)$, we define an algebraic curvature tensor $Q(R) \in \mathscr{C}_B(\mathbb{R}^n)$ by 
\[Q(R)_{ijkl} = \sum_{p,q=1}^n R_{ijpq} \, R_{klpq} + 2 \sum_{p,q=1}^n (R_{ipkq} \, R_{jplq} - R_{iplq} \, R_{jpkq}).\] 
The term $Q(R)$ arises naturally in the evolution equation of the curvature tensor under the Ricci 
flow (cf. \cite{Hamilton}). The ordinary differential equation $\frac{d}{dt} R = Q(R)$ on $\mathscr{C}_B(\mathbb{R}^n)$ will be referred to as the Hamilton ODE.

We next consider a cone $C \subset \mathscr{C}_B(\mathbb{R}^n)$ with the following properties: 

(i) $C$ is closed, convex, and $O(n)$-invariant.

(ii) $C$ is invariant under the Hamilton ODE $\frac{d}{dt} R = Q(R)$.

(iii) If $R \in C \setminus \{0\}$, then the scalar curvature of $R$ is nonnegative and the Ricci tensor of $R$ is non-zero.

(iv) The curvature tensor $I_{ijkl} = \delta_{ik} \, \delta_{jl} - \delta_{il} \, \delta_{jk}$ lies in the interior of $C$.

We now state the main result of this note:

\begin{theorem}
\label{general.principle}
Let $C \subset \mathscr{C}_B(\mathbb{R}^n)$ be a cone which satisfies the conditions (i)--(iv) above, and let $(M,g)$ be a compact Einstein manifold of dimension $n$. Moreover, suppose that the curvature tensor of $(M,g)$ lies in the interior of the cone $C$ for all points $p \in M$. Then $(M,g)$ has constant sectional curvature.
\end{theorem}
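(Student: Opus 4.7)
The strategy is a maximum-principle/tangent-cone argument modeled directly on the proof of Theorem \ref{pic}, generalized from the PIC cone to an arbitrary cone $C$ satisfying (i)--(iv). The algebraic input is the Bochner identity for Einstein metrics: since $\text{Ric}=\rho g$, the Ricci flow starting at $g$ is a pure homothety, and matching Hamilton's evolution of the curvature tensor with this rescaling yields
\[ \Delta R + Q(R) = 2\rho R \]
pointwise on $(M,g)$. Because $R(p)\in\operatorname{int}(C)\setminus\{0\}$, condition (iii) together with $\text{Ric}=\rho g$ forces $\rho>0$.

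To exploit this identity I consider the family $R(p)-\lambda I$ for $\lambda\geq 0$ and set
\[ \lambda_0 = \sup\bigl\{\lambda\geq 0 \,:\, R(p)-\lambda I\in C \text{ for every } p\in M\bigr\}. \]
Since $R(p), I\in\operatorname{int}(C)$ and $M$ is compact, $\lambda_0>0$ and the supremum is attained at some point $p_0\in M$ with $S:=R(p_0)-\lambda_0 I\in\partial C$. Hahn--Banach supplies a nonzero linear functional $\ell$ on $\mathscr{C}_B(\mathbb{R}^n)$ with $\ell\geq 0$ on $C$ and $\ell(S)=0$; condition (iv) then gives $\ell(I)>0$. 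The function $f(p):=\ell(R(p)-\lambda_0 I)$ is nonnegative on $M$ with $f(p_0)=0$, so $\Delta f(p_0)\geq 0$, and the Bochner identity gives
\[ \ell(Q(R(p_0)))\leq 2\rho\lambda_0\,\ell(I). \]

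The remaining step is algebraic. Expanding $Q(R(p_0))=Q(S)+2\lambda_0\,B(S,I)+\lambda_0^2\,Q(I)$ via the symmetric polarization $B$ of $Q$, three facts come in: (a) invariance of $C$ under the Hamilton ODE gives $\ell(Q(S))\geq 0$; (b) a direct computation yields $Q(I)=2(n-1)I$; and (c) the cross term $\ell(B(S,I))$ depends on $S$ only through Ricci-type contractions. On $(M,g)$ one has $\text{Ric}(S)=(\rho-(n-1)\lambda_0)g$, so when $S\neq 0$ condition (iii) applied to $S$ forces $\rho>(n-1)\lambda_0$. Combining (a)--(c) with this strict inequality should contradict the displayed bound, forcing $S=0$, i.e., $R(p_0)=\lambda_0 I$. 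A Hamilton-type strong maximum principle, applied to the elliptic relation $\Delta R + Q(R)=2\rho R$ on the connected compact manifold $M$, then propagates this equality to all of $M$, giving $R\equiv\lambda_0 I$ and hence constant sectional curvature.

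The principal obstacle I anticipate is the last algebraic step: extracting quantitative control over $\ell(B(S,I))$ from the abstract conditions (iii) and (iv) alone. In Theorem \ref{pic}, the explicit description of the extremal rays of the PIC cone made this transparent. For a general $C$ one must instead combine $\ell(I)>0$ from (iv) with the sharp constraint $\text{Ric}(S)=(\rho-(n-1)\lambda_0)g$ coming from (iii) and the Einstein hypothesis, and argue that the resulting Ricci-type contractions appearing in $\ell(B(S,I))$ are large enough to drive the required contradiction. Once that positivity is secured, the rest is bookkeeping via the elliptic strong maximum principle on $M$.
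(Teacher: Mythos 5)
Your functional-analytic set-up is essentially a dual formulation of the paper's tangent-cone argument, and the step you flag as the ``principal obstacle'' is in fact not one: it closes exactly, with no estimates needed. Polarizing $Q$ against $I$ gives, for any algebraic curvature tensor $X$,
\[ B(X,I)_{ijkl} = \mathrm{Ric}(X)_{ik}\,\delta_{jl} - \mathrm{Ric}(X)_{il}\,\delta_{jk} - \mathrm{Ric}(X)_{jk}\,\delta_{il} + \mathrm{Ric}(X)_{jl}\,\delta_{ik}, \]
so the cross term depends only on $\mathrm{Ric}(X)$ (this is the same computation as in Proposition \ref{pde.for.S}). Since $(M,g)$ is Einstein, $\mathrm{Ric}(S) = c\,g$ with $c := \rho - (n-1)\lambda_0$ a \emph{global constant}, hence $B(S,I) = 2c\,I$ exactly. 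Plugging this, $\ell(Q(S)) \geq 0$ (from ODE-invariance: $Q(S) \in T_S C$ and $\ell \geq 0$ on $T_S C$ because $\ell \geq 0$ on $C$, $\ell(S)=0$) and $Q(I) = 2(n-1)I$ into your displayed bound gives $4\lambda_0 c\,\ell(I) + 2(n-1)\lambda_0^2\,\ell(I) \leq 2\rho\lambda_0\,\ell(I)$, and dividing by $2\lambda_0\ell(I) > 0$ yields $c \leq 0$. If $S \neq 0$, condition (iii) forces $c > 0$ (nonnegative scalar curvature gives $c \geq 0$, non-vanishing Ricci gives $c \neq 0$), a contradiction; so $S = 0$ at $p_0$. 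This mirrors the paper's use of conditions (ii)--(iv), with (iv) entering only through $\ell(I) > 0$ rather than through $I \in \operatorname{int}(T_S C)$ as in Proposition \ref{kappa}. (Minor points you should make explicit: $f$ must be defined near $p_0$ via a parallel orthonormal frame so that $\Delta f(p_0) = \ell(\Delta R(p_0))$, with $O(n)$-invariance of $C$ making membership frame-independent; and $\lambda_0 < \infty$ follows from the scalar-curvature part of (iii).)

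The genuine gap is your last step. There is no off-the-shelf ``Hamilton-type strong maximum principle'' for the elliptic system $\Delta R + Q(R) = 2\rho R$ that upgrades the single-point identity $R(p_0) = \lambda_0 I$ to $R \equiv \lambda_0 I$; in your framework one would need a one-sided inequality of the form $\Delta f \leq C f$ near $p_0$, i.e.\ a lower bound $\ell(Q(R(p)-\lambda_0 I)) \geq -C\,\ell(R(p)-\lambda_0 I)$, and nothing in (i)--(iv) provides this at points where $R - \lambda_0 I$ lies in the interior of $C$ (a supporting functional $\ell$ does not control the norm of elements of $C$). Fortunately the propagation is unnecessary, and the correct ending is the paper's own: since $S(p_0) = 0$ you get $c = 0$, i.e.\ $\rho = (n-1)\lambda_0$; but then $\mathrm{Ric}\bigl(R(p) - \lambda_0 I\bigr) = c\,g = 0$ at \emph{every} point of $M$ (the Einstein condition makes this Ricci tensor globally constant), while $R(p) - \lambda_0 I \in C$ everywhere, so condition (iii) forces $R(p) = \lambda_0 I$ pointwise. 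Even more directly: the constant $c$ cannot be positive by your inequality at $p_0$, yet (iii) would make it positive if $R(q) - \lambda_0 I \neq 0$ at any single point $q$; hence $R \equiv \lambda_0 I$ and $(M,g)$ has constant sectional curvature. Replace the strong maximum principle by this observation and your proof is complete, and genuinely parallel to the paper's (which phrases the same mechanism as: $\kappa < 1$ is impossible by Proposition \ref{kappa}, so $\kappa = 1$, so $\mathrm{Ric}(S) \equiv 0$, so $S \equiv 0$ by (iii)).
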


As an example, let us consider the cone 
\[C = \{R \in \mathscr{C}_B(\mathbb{R}^n): \text{\rm $R$ has nonnegative isotropic curvature}\}.\] 
For this choice of $C$, the conditions (i) and (iv) are trivially satisfied. Moreover, it follows from a result of M.~Micallef and M.~Wang (see \cite{Micallef-Wang}, Proposition 2.5) that $C$ satisfies condition (iii) above. Finally, the cone $C$ also satisfies the condition (ii). This was proved independently in \cite{Brendle-Schoen} and \cite{Nguyen}. Therefore, Theorem \ref{pic} may be viewed as a subcase of Theorem \ref{general.principle}.

\section{Proof of Theorem \ref{general.principle}}

The proof of Theorem \ref{general.principle} is similar to the proof of Theorem 16 in \cite{Brendle-Duke}. Let $(M,g)$ be a compact Einstein manifold of dimension $n$ with the property that the curvature tensor of $(M,g)$ lies in the interior of $C$ for all points $p \in M$. If $(M,g)$ is Ricci flat, then the curvature tensor of $(M,g)$ vanishes identically. Hence, it suffices to consider the case that $(M,g)$ has positive Einstein constant. After rescaling the metric if necessary, we may assume that $\text{\rm Ric} = (n-1) \, g$. As in \cite{Brendle-Duke}, we define an algebraic curvature tensor $S$ by 
\begin{equation} 
\label{def.S}
S_{ijkl} = R_{ijkl} - \kappa \, (g_{ik} \, g_{jl} - g_{il} \, g_{jk}), 
\end{equation}
where $\kappa$ is a positive constant. Let $\kappa$ be the largest real number with the property that $S$ lies in the cone $C$ for all points $p \in M$. Since the curvature tensor $R$ lies in the interior of the cone $C$ for all points $p \in M$, we conclude that $\kappa > 0$. On the other hand, the curvature tensor $S$ has nonnegative scalar curvature. From this, we deduce that $\kappa \leq 1$.

\begin{proposition} 
\label{pde.for.S}
The tensor $S$ satisfies 
\[\Delta S + Q(S) = 2(n-1) \, S + 2(n-1) \kappa \, (\kappa - 1) \, I,\] 
where $I_{ijkl} = g_{ik} \, g_{jl} - g_{il} \, g_{jk}$.
\end{proposition}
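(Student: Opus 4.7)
The plan is to reduce everything to the identity $\Delta R + Q(R) = 2(n-1)R$, which holds on our Einstein manifold, combined with two purely algebraic computations. Since $I_{ijkl} = g_{ik}g_{jl} - g_{il}g_{jk}$ is parallel, $\Delta S = \Delta R$. Writing $Q(\cdot,\cdot)$ for the symmetric bilinear form associated to $Q$ (so that $Q(A,A) = Q(A)$), polarization gives
\[ Q(S) = Q(R) - 2\kappa\, Q(R,I) + \kappa^2\, Q(I). \]
Thus the proof reduces to establishing the Einstein PDE for $R$ and evaluating $Q(I)$ and $Q(R,I)$ in closed form.

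For the Einstein PDE, I would start from the second Bianchi identity $\nabla_i R_{jklm} + \nabla_j R_{kilm} + \nabla_k R_{ijlm} = 0$ and apply $\nabla^i$. The contracted Bianchi identity $\nabla^i R_{iklm} = \nabla_l R_{km} - \nabla_m R_{kl}$ vanishes because the Einstein condition makes $\mathrm{Ric}$ parallel, leaving $\Delta R_{jklm} = -\nabla^i\nabla_j R_{kilm} - \nabla^i\nabla_k R_{ijlm}$. Commuting derivatives and using $\nabla^i R_{iklm} = 0$ to kill divergence pieces, only curvature commutators remain; after substituting $\mathrm{Ric} = (n-1)g$ into the resulting Ricci contractions and tidying up with the first Bianchi identity, one arrives at $\Delta R + Q(R) = 2(n-1)R$. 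A sanity check on a space form $R = KI$, where $\Delta R = 0$ and $Q(R) = K^2 Q(I) = 2(n-1)K \cdot R$, confirms the constant.

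The algebraic computations come next. Plugging $I_{ipkq} = \delta_{ik}\delta_{pq} - \delta_{iq}\delta_{pk}$ into the definition of $Q$ and collecting diagonal contractions gives $Q(I) = 2(n-1)I$ directly. The cross-term $Q(R,I)$ is the main technical obstacle. Expanding the polarization term by term and using the Einstein identity $\sum_p R_{ipkp} = \mathrm{Ric}_{ik} = (n-1)\delta_{ik}$ to simplify every Ricci-like contraction, one obtains
\[ Q(R,I)_{ijkl} = 2R_{ijkl} + 2(n-1)I_{ijkl} + 2(R_{iklj} - R_{ilkj}). \]
The residual curvature tail collapses via the first Bianchi identity $R_{iklj} + R_{iljk} + R_{ijkl} = 0$: combined with $R_{iljk} = -R_{ilkj}$, this forces $R_{iklj} - R_{ilkj} = -R_{ijkl}$, so the $2R_{ijkl}$ pieces cancel and $Q(R,I) = 2(n-1)I$.

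Assembling the pieces,
\[ \Delta S + Q(S) = 2(n-1)R - 4(n-1)\kappa\, I + 2(n-1)\kappa^2\, I, \]
and substituting $R = S + \kappa I$ and regrouping yields exactly $2(n-1)S + 2(n-1)\kappa(\kappa-1)I$. The hardest single step is the index bookkeeping in $Q(R,I)$ and the invocation of the first Bianchi identity to collapse the last term; everything else is a straightforward polarization argument together with the standard Bianchi derivation of the Einstein PDE.
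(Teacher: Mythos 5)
Your proposal is correct and follows essentially the same route as the paper: the identity $\Delta R + Q(R) = 2(n-1)R$ (which the paper simply cites from Proposition 3 of the Duke paper, while you sketch its standard Bianchi-identity derivation), combined with the algebraic expansion of $Q(S)$ for $S = R - \kappa I$. Your polarization bookkeeping is accurate — the cross term $Q(R,I)$ you compute, namely the Ricci expression which reduces to $2(n-1)I$ under $\mathrm{Ric} = (n-1)g$, is exactly the term the paper writes out directly, and $Q(I) = 2(n-1)I$ is likewise correct, so the two computations coincide.
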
 

\begin{proof} 
The curvature tensor of $(M,g)$ satisfies 
\begin{equation} 
\label{pde.for.R}
\Delta R + Q(R) = 2(n-1) \, R 
\end{equation}
(see \cite{Brendle-Duke}, Proposition 3). Using (\ref{def.S}), we compute 
\begin{align*}
Q(S)_{ijkl} 
&= Q(R)_{ijkl} + 2(n-1) \, \kappa^2 \, (g_{ik} \, g_{jl} - g_{il} \, g_{jk}) \\ 
&- 2\kappa \, (\text{\rm Ric}_{ik} \, g_{jl} - \text{\rm Ric}_{il} \, g_{jk} - \text{\rm Ric}_{jk} \, g_{il} + \text{\rm Ric}_{jl} \, g_{ik}). 
\end{align*} 
Since $\text{\rm Ric} = (n-1) \, g$, it follows that 
\begin{equation} 
\label{QS}
Q(S) = Q(R) + 2(n-1) \kappa \, (\kappa-2) \, I. 
\end{equation}
Combining (\ref{pde.for.R}) and (\ref{QS}), we obtain 
\[\Delta S + Q(S) = 2(n-1) \, R + 2(n-1) \kappa \, (\kappa-2) \, I.\] 
Since $R = S + \kappa I$, the assertion follows. \qed
\end{proof}

In the following, we denote by $T_S C$ the tangent cone to $C$ at $S$.

\begin{proposition}
\label{Laplacian.lies.in.tangent.cone}
At each point $p \in M$, we have $\Delta S \in T_S C$ and $Q(S) \in T_S C$.
\end{proposition}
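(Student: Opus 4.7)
The plan is to treat the two inclusions separately and reduce each to points where $S$ touches $\partial C$. At any $p \in M$ with $S(p)$ in the interior of $C$, the tangent cone $T_{S(p)} C$ is all of $\mathscr{C}_B(\mathbb{R}^n)$ and both assertions are vacuous. So the real content is at points $p_0$ where $S(p_0) \in \partial C$; by the maximality of $\kappa$ such points exist, and it is enough to establish the two inclusions there.

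For $Q(S) \in T_S C$, I would appeal directly to condition (ii). Positive invariance of a closed convex set under a smooth vector field is equivalent, by the standard Nagumo tangency criterion, to the vector field lying in the tangent cone at every point of the set. Applied to the Hamilton ODE $\frac{d}{dt} R = Q(R)$ and the cone $C$, this yields $Q(R) \in T_R C$ for every $R \in C$, and evaluating at $R = S(p)$ gives the claim pointwise. (The one direction we need here is immediate: if a trajectory starting at $R \in C$ stays in $C$, then $\frac{1}{t}(R(t)-R) \in T_R C$ for small $t > 0$, and letting $t \to 0^+$ one gets $Q(R) \in T_R C$ since this cone is closed.)

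For $\Delta S \in T_S C$, I would use a supporting hyperplane argument of maximum-principle type. Fix $p_0$ with $S(p_0) \in \partial C$ and, by Hahn--Banach, pick any linear functional $\ell$ on $\mathscr{C}_B(\mathbb{R}^n)$ that is nonpositive on $C$ and vanishes at $S(p_0)$. Since $S(p) \in C$ for all $p \in M$, the smooth function $f(p) := \ell(S(p))$ satisfies $f \leq 0$ with $f(p_0) = 0$, so $p_0$ is a maximum of $f$ and hence $\Delta f(p_0) \leq 0$. By linearity of $\ell$ this reads $\ell(\Delta S(p_0)) \leq 0$. Ranging over all such supporting functionals $\ell$ and using the dual description of $T_{S(p_0)} C$ as the intersection of the half-spaces $\{V : \ell(V) \leq 0\}$ gives $\Delta S(p_0) \in T_{S(p_0)} C$, as required.

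The proof is not really technical once the right framework is in place; the only point that needs a little care is the dual characterization of the tangent cone of a closed convex cone together with the tangency criterion for ODE invariance. Both are standard facts about closed convex sets in finite dimensions, so I do not foresee a substantive obstacle.
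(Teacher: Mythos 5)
Your proposal is correct and takes essentially the same route as the paper, whose proof simply invokes the tensor maximum principle for $\Delta S \in T_S C$ and the ODE-invariance of $C$ (condition (ii)) for $Q(S) \in T_S C$; your supporting-hyperplane argument and the Nagumo-type tangency argument are precisely the standard proofs behind those two invocations. The only point you gloss over is that $\ell(S(p))$ is well-defined only after identifying $\mathscr{C}_B(T_p M)$ with $\mathscr{C}_B(\mathbb{R}^n)$ via a local orthonormal frame; choosing the frame parallel at $p_0$ (say by radial parallel transport) and using the $O(n)$-invariance of $C$ makes $f$ smooth near $p_0$ with $f \leq 0$, $f(p_0)=0$, and $\Delta f(p_0) = \ell(\Delta S(p_0))$, so your argument goes through as stated.
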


\begin{proof}
It follows from the definition of $\kappa$ that $S$ lies in the cone $C$ for all points $p \in M$. Hence, the maximum principle implies that $\Delta S \in T_S C$. Moreover, since the cone $C$ is invariant under the Hamilton ODE, we have $Q(S) \in T_S C$. \qed
\end{proof}

\begin{proposition}
\label{kappa}
Suppose that $\kappa < 1$. Then $S$ lies in the interior of the cone $C$ for all points $p \in M$.
\end{proposition}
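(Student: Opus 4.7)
The plan is to argue by contradiction using a supporting-hyperplane version of the maximum principle, exploiting the sign of the inhomogeneous term $2(n-1)\kappa(\kappa-1)\,I$ in Proposition~\ref{pde.for.S}, which becomes strictly negative as soon as $\kappa<1$.

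Suppose, for contradiction, that $S$ lies on the boundary $\partial C$ at some point $p_0\in M$. Since $C$ is closed and convex, we may choose a nonzero linear functional $\ell$ on $\mathscr{C}_B(\mathbb{R}^n)$ that supports $C$ at $S(p_0)$, in the sense that $\ell\ge 0$ on $C$ and $\ell(S(p_0))=0$. By a standard fact, a supporting functional of $C$ at $S(p_0)$ is automatically nonnegative on the tangent cone $T_{S(p_0)}C$.

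Next I would extract the three key sign facts at $p_0$. First, by the choice of $\kappa$, the tensor $S$ lies in $C$ throughout $M$, so the scalar function $\ell\circ S\colon M\to\mathbb{R}$ is nonnegative and attains its minimum value $0$ at $p_0$; therefore $\Delta(\ell\circ S)(p_0)=\ell(\Delta S(p_0))\ge 0$. Second, Proposition~\ref{Laplacian.lies.in.tangent.cone} gives $Q(S(p_0))\in T_{S(p_0)}C$, hence $\ell(Q(S(p_0)))\ge 0$. Third, condition (iv) places $I$ in the interior of $C$, which forces $\ell(I)>0$ (otherwise a small ball around $I$ would straddle the hyperplane $\{\ell=0\}$, contradicting $\ell\ge 0$ on $C$). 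Finally, $\ell(S(p_0))=0$ by construction.

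Now I would apply $\ell$ to both sides of the identity in Proposition~\ref{pde.for.S} at the point $p_0$. The left-hand side becomes $\ell(\Delta S(p_0))+\ell(Q(S(p_0)))\ge 0$, while the right-hand side equals $2(n-1)\,\ell(S(p_0))+2(n-1)\kappa(\kappa-1)\,\ell(I)=2(n-1)\kappa(\kappa-1)\,\ell(I)$, which is strictly negative because $0<\kappa<1$ and $\ell(I)>0$. This contradiction shows that $S$ cannot touch $\partial C$ at any point, completing the proof.

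There is no real obstacle here beyond the bookkeeping of signs; the only subtle step is verifying that a supporting functional of $C$ at a boundary point is nonnegative on the entire tangent cone, which is where Proposition~\ref{Laplacian.lies.in.tangent.cone} gets leveraged. The essential input from $\kappa<1$ is simply that the forcing term in Proposition~\ref{pde.for.S} is a strictly negative multiple of an interior element of $C$.
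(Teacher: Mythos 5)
Your proof is correct, but it is the dual version of the paper's argument rather than a reproduction of it. The paper works primally at an arbitrary point: it adds the memberships $\Delta S \in T_S C$, $Q(S) \in T_S C$ and $-2(n-1)S \in T_S C$, uses Proposition \ref{pde.for.S} to conclude $2(n-1)\kappa(\kappa-1)\,I \in T_S C$, deduces that $-I$ lies in the interior of the tangent cone (since $I$ does), and then invokes Proposition 5.4 of \cite{Brendle-book} to convert this tangent-cone information into $S - \varepsilon I \in C$, i.e. interiority of $S$. You instead assume $S(p_0) \in \partial C$, choose a supporting functional $\ell$ with $\ell \geq 0$ on $C$ and $\ell(S(p_0)) = 0$ (which indeed can be taken through the origin because $C$ is a closed convex cone), observe $\ell \geq 0$ on $T_{S(p_0)}C$ and $\ell(I) > 0$, and reach a sign contradiction by applying $\ell$ to the identity of Proposition \ref{pde.for.S}; note that you also use $\kappa > 0$, which the paper establishes before these propositions. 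What your route buys is that the Hahn--Banach separation replaces the citation of Proposition 5.4 in \cite{Brendle-book}, so the step from tangent-cone data to interiority is self-contained; what the paper's route buys is a direct (non-contradiction) proof that applies uniformly at every point. One small caveat in your write-up: $\ell \circ S$ is not literally a globally defined function on $M$, since $S(p)$ lives in $\mathscr{C}_B(T_pM)$ rather than $\mathscr{C}_B(\mathbb{R}^n)$; to justify $\Delta(\ell \circ S)(p_0) = \ell(\Delta S(p_0)) \geq 0$ you should either extend the frame (and hence $\ell$) by parallel transport near $p_0$, using the $O(n)$-invariance of $C$ to keep $\ell(S) \geq 0$, or more simply quote Proposition \ref{Laplacian.lies.in.tangent.cone}, which already gives $\Delta S(p_0) \in T_{S(p_0)}C$ and hence $\ell(\Delta S(p_0)) \geq 0$, exactly as you argued for $Q(S)$.
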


\begin{proof}
Let us fix a point $p \in M$. By Proposition \ref{Laplacian.lies.in.tangent.cone}, we have $\Delta S \in T_S C$ and $Q(S) \in T_S C$. Furthermore, we have $-S \in T_S C$ since $C$ is a cone. Putting these facts together, we obtain 
\[\Delta S + Q(S) - 2(n-1) \, S \in T_S C.\] 
Using Proposition \ref{pde.for.S}, we conclude that 
\[2(n-1) \kappa \, (\kappa - 1) \, I \in T_S C.\] 
Since $0 < \kappa < 1$, it follows that $-2I \in T_S C$. On the other hand, $I$ lies in the interior of the tangent cone $T_S C$. Hence, the sum $-2I + I = -I$ lies in the interior of the tangent cone $T_S C$. By Proposition 5.4 in \cite{Brendle-book}, there exists a real number $\varepsilon > 0$ such that $S - \varepsilon I \in C$. Therefore, $S$ lies in the interior of the cone $C$, as claimed. \qed
\end{proof}

\begin{proposition}
The algebraic curvature tensor $S$ defined in (\ref{def.S}) vanishes identically.
\end{proposition}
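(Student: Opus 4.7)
The plan is to show first that $\kappa = 1$, and then to use condition (iii) together with a direct trace computation to conclude that $S$ must vanish.

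To force $\kappa = 1$, I would argue by contradiction. Suppose $\kappa < 1$. Then Proposition \ref{kappa} tells us that $S(p)$ lies in the interior of the closed convex cone $C$ for every point $p \in M$. Since $M$ is compact and $C$ is closed, a standard continuity argument (applied, for instance, to the distance from $S(p)$ to the complement of $C$) produces some $\delta > 0$ such that $S(p) - \delta \, I \in C$ for all $p \in M$. But $S(p) - \delta I$ is exactly the tensor obtained from (\ref{def.S}) with $\kappa$ replaced by $\kappa + \delta$, so this contradicts the maximality of $\kappa$. Hence $\kappa = 1$.

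Next I would compute the Ricci tensor of $S$ in the case $\kappa = 1$. Taking a trace in (\ref{def.S}) and using that $(M,g)$ is Einstein with $\text{\rm Ric} = (n-1)\,g$, one obtains
\[\text{\rm Ric}(S) = \text{\rm Ric}(R) - \kappa\,(n-1)\,g = (1-\kappa)(n-1)\,g,\]
which vanishes identically when $\kappa = 1$. By the defining property of $\kappa$, the tensor $S(p)$ lies in $C$ at every point $p \in M$. Condition (iii) in the hypotheses on $C$ asserts that any nonzero element of $C$ has nonzero Ricci tensor; applied pointwise to $S$, this forces $S(p) = 0$ for every $p \in M$.

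The main obstacle is essentially packaged away in Proposition \ref{kappa}; what remains is only the compactness-and-closedness step used to upgrade ``interior of $C$ pointwise'' to ``uniformly in the interior,'' which is routine. The one thing I would double-check is that the normalization in (\ref{def.S}) really makes $\text{\rm Ric}(S)$ vanish precisely at $\kappa = 1$, so that the critical value produced by the maximality argument matches the value forced by condition (iii); the trace computation above confirms this.
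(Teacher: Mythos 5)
Your proposal is correct and follows essentially the same route as the paper: force $\kappa = 1$ via Proposition \ref{kappa} (your compactness argument just makes explicit the step the paper phrases as ``there exists a point $p_0$ with $S \in \partial C$''), then note that $\kappa = 1$ makes $\text{\rm Ric}(S)$ vanish and invoke condition (iii) to conclude $S \equiv 0$. The trace computation and the uniform-interior step are accurate fill-ins of details the paper leaves implicit.
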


\begin{proof}
By definition of $\kappa$, there exists a point $p_0 \in M$ such that $S \in \partial C$ at $p_0$. Hence, it follows from Proposition \ref{kappa} that $\kappa = 1$. Consequently, the Ricci tensor of $S$ vanishes identically. Since $S \in C$ for all points $p \in M$, we conclude that $S$ vanishes identically. \qed
\end{proof}

Since $S$ vanishes identically, the manifold $(M,g)$ has constant sectional curvature. This completes the proof of Theorem \ref{general.principle}.

\end{document}